\newtheorem{theorem}{Theorem}[section]
\theoremstyle{definition}
\newtheorem{definition}{Definition}[section]
\theoremstyle{definition}
\newcommand{\R}{\ensuremath{\mathbb{R}}}
\newcommand{\Z}{\ensuremath{\mathbb{Z}}}
\newcommand\F{\mathbb F}
\def\X{\mathbb{X}}
\def\O{\mathbb{O}}
\DeclareMathOperator{\id}{id}
\newcommand{\bdy}{\partial}
\newcommand{\ainf}{\mathcal{A}_\infty}
\newcommand{\zzz}{\mathbf{z}}
\newcommand{\xx}{{\bf x}}
\newcommand{\yy}{{\bf y}}
\newcommand{\T}{\mathcal{T}}
\newcommand{\OO}{\mathbb O}
\newcommand{\XX}{\mathbb X}
\newcommand{\balpha}{\boldsymbol\alpha}
\newcommand{\bbeta}{\boldsymbol\beta}
\newcommand{\bgamma}{\boldsymbol\gamma}
\def\Aa{\mathcal{A}}
\def\P{\mathcal{P}}
\newcommand{\HH}{\mathcal{H}}
\def\SO{S_\O}
\def\SX{S_\X}
\def\CDA^-{\mathit{CDA}^-}
\def\AA{\ensuremath{\mathit{AA}}}
\def\DA{\ensuremath{\mathit{DA}}}
\def\bar{\overline}
\newcommand{\hf}{\mathit{HF}}
\newcommand{\cfk}{\mathit{CFK}}
\newcommand{\hfhat}{\widehat{\hf}}
\newcommand{\cfkhat}{\widehat{\cfk}}
\newcommand{\hfk}{\mathit{HFK}}
\newcommand{\ct}{\mathit{CT}}
\newcommand{\ctt}{\widetilde{\ct}}
\newcommand{\hc}{\HH^{\circ}}
\newcommand{\xc}{\xx^{\circ}}
\newcommand{\yc}{\yy^{\circ}}
\newcommand\blfootnote[1]{%
  \begingroup
  \renewcommand\thefootnote{}\footnote{#1}%
  \addtocounter{footnote}{-1}%
  \endgroup
}
\begin{document}

\title{A self-pairing theorem for tangle Floer homology}

\author{Ina Petkova}
\address {Department of Mathematics, Rice University\\ Houston, TX 77005}
\email {ina@rice.edu}
\urladdr{\href{http://www.math.rice.edu/~tvp}{http://www.math.rice.edu/\~{}tvp1}}
\author[Vera V\'ertesi]{Vera V\'ertesi}
\address{Institut de Recherche Math\'ematique Avanc\'ee \\Universit\'e de Strasbourg}
\email{vertesi@math.unistra.fr}

\maketitle

\begin{abstract}
We show that for a tangle $T$ with $-\bdy^0T \cong \bdy^1 T$ the Hochschild homology of the tangle Floer homology $\ctt(T)$ is equivalent to the link Floer homology of the closure $T' = T/(-\bdy^0T \sim \bdy^1 T)$ of the tangle, linked with  the tangle axis. In addition, we show that the action of the braid group on tangle Floer homology is faithful.
\end{abstract}

\section{introduction}

\blfootnote{IP received support from an AMS-Simons travel grant. VV was supported by ERC Geodycon, OTKA grant number NK81203 and NSF grant number 1104690.}

Tangle Floer homology is an invariant of tangles in $3$-manifolds with  boundary  $S^2$ or $S^2\coprod S^2$, or in closed $3$-manifolds, which  takes the form of a differential graded module, bimodule, or a chain complex, respectively \cite{pv}.  It behaves well under gluing and recovers knot Floer homology. 
Before we state the main results, we recall some  definitions from \cite{pv} and make some new ones. 

\begin{definition}\label{def:sphere}
An \emph{$n$-marked sphere} $\mathcal S$ is a sphere $S^2$ with $n$ oriented points $t_1,\dots,t_n$  on its equator $S^1\subset S^2$ numbered respecting the orientation of $S^1$.
\end{definition}

\begin{definition}\label{tangle_def}
A \emph{marked $(m,n)$-tangle} $\T$ in an oriented $3$-manifold $Y$ with two boundary components  $\bdy^0 Y\cong S^2$ and $\bdy^1
Y\cong S^2$ is a properly embedded 1--manifold $T$ with $(-\bdy^0
Y,-(\bdy^0 Y\cap\bdy T))$ identified with an $m$-marked sphere and $(\bdy^1 Y,\bdy^1
Y\cap\bdy T)$ identified with an
$n$-marked sphere (via orientation-preserving diffeomorphisms).
We denote $-(\bdy^0 Y\cap\bdy T)$ and $\bdy^1 Y\cap\bdy T$ along with the ordering information by $-\bdy^0\T$ and  $\bdy^1\T$.
\end{definition}

\begin{definition}\label{stangle_def}
A \emph{strongly marked $(m,n)$-tangle $(Y, \T, \gamma)$} is a marked $(m,n)$-tangle $(Y, \T)$, along with a framed arc $\gamma$  connecting $\bdy^0 Y$ to $\bdy^1 Y$ in the complement of $\T$ such that  $\gamma$ and its framing $\lambda_{\gamma}$ (viewed as a push off of $\gamma$) have ends on the equators of the two marked spheres,  and we see $-\bdy^0\T,  -\bdy^0\gamma, -\bdy^0\lambda_{\gamma}$ and $\bdy^1\T,  \bdy^1\gamma, \bdy^1\lambda_{\gamma}$ in this order along each equator. See Figure \ref{fig:gen_tc}.
\end{definition}

As a special case, an $(m,n)$-tangle in $\R^2\times I$ is a cobordism (contained in $[1,\infty)\times \R\times I$) from $\{1, \ldots, m\}\times \{0\}\times \{0\}$ to $\{1, \ldots, n\}\times \{0\}\times \{1\}$. A tangle in $\R^2\times I$ can be thought of as a strongly marked tangle, by compactifying $\R^2\times I$ to $S^2\times I$, taking the images of $\R\times \{0\}\times \{0\}$ and $\R\times \{0\}\times \{1\}$ to be the equators of the marked spheres, and setting $(\gamma, \lambda_{\gamma}):=(\{(-1,0)\}\times I, \{(0,0)\}\times I)$.

We turn our attention to  strongly marked tangles $(Y, \T, \gamma)$ with $-\bdy^0\T \cong \bdy^1 \T$. 

\begin{definition}\label{def:c} 
A strongly marked tangle $(Y, \T, \gamma)$ is called \emph{closable} if $-\bdy^0\T \cong \bdy^1\T$. Given a closable tangle $(Y, \T, \gamma)$, we can ``glue it to itself" to form its \emph{closure} $(T', Y', \gamma')$ by identifying the two boundary components of $Y$,  $-\bdy^0Y$ and $\bdy^1Y$, with the same marked sphere. The \emph{surgered closure} of  $(Y, \T, \gamma)$ is the pair $(T_0, Y_0)$, where the link $T_0$ is the union of $T'$ and the negatively oriented meridian $\mu_{\gamma}$ of $\gamma$ in the $0$-surgery $Y_0 = Y'_0(\gamma')$ of $Y'$ along the framed knot $\gamma'$. We call $\mu_{\gamma}$ the \emph{tangle axis} of the tangle $\T$. See Figure \ref{fig:gen_tc}.
\end{definition}

\begin{figure}[h]
 \centering
      \labellist   
        \pinlabel $\bdy^0Y$ at -20 135 
        \pinlabel $T$ at 45 70 
        \pinlabel $\dots$ at 50 120 
        \pinlabel $\dots$ at 50 18 
        \pinlabel $\dots$ at 275 120 
        \pinlabel $\dots$ at 275 18 
        \pinlabel $\bdy^1Y$ at -20 8 
        \pinlabel \textcolor{green}{$\gamma$} at 100 70 
        \pinlabel \textcolor{green}{$\lambda_{\gamma}$} at 122 70 
        \pinlabel $T$ at 270 70 
        \pinlabel $0$ at 346 110 
       \pinlabel $\mu_{\gamma}$ at 360 70 
     \endlabellist
       \includegraphics[scale=.7]{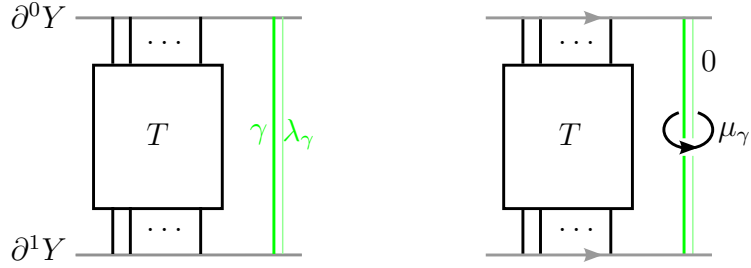} 
       \vskip .2 cm
       \caption{Left: A strongly marked tangle $(T, Y, \gamma)$. Right: The surgered closure $(T_0, Y_0)$ of the tangle  $(T, Y, \gamma)$.}\label{fig:gen_tc}
\end{figure}

When $Y$ is $S^2\times I$ and $(\gamma, \lambda_{\gamma})$ is a product as above,  then $Y_0\cong S^3$ and  $T_0$ is the link formed by  the closure of $T\subset \R^2\times I\subset S^3$ and an unknot that is the boundary of a disk containing $-\bdy^0T \sim \bdy^1T$, see Figure \ref{fig:tc}.
For example, for a braid $B\in \R^2\times I$, the tangle axis is precisely the braid axis. 

\begin{figure}[h]
 \centering
      \labellist   
        \pinlabel $T$ at 55 120 
        \pinlabel $\dots$ at 60 175 
        \pinlabel $\dots$ at 60 70 
        \pinlabel $\dots$ at 332 175 
        \pinlabel $\dots$ at 332 70 
        \pinlabel $T$ at 328 120
     \endlabellist
       \includegraphics[scale=.6]{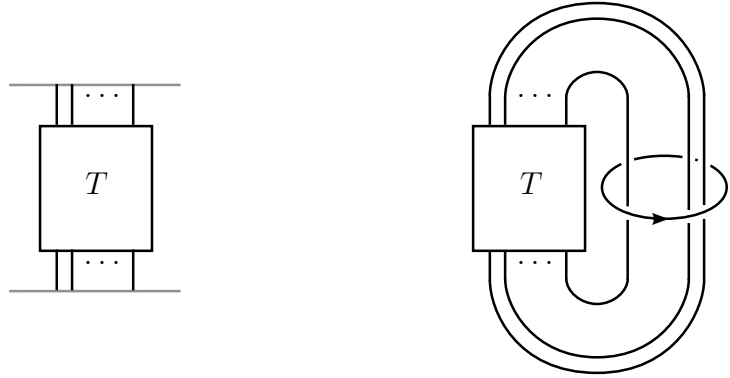} 
       \caption{Left: A tangle $T\subset \R^2\times I$. Right: The corresponding link $T_0\subset S^3$.}\label{fig:tc}
\end{figure}

For a tangle $(Y, \T, \gamma)$,  the tangle Floer homology $\ctt(Y, \T, \gamma)$ is a left-right $\DA$ bimodule over $(\Aa(-\bdy^0\T), \Aa(\bdy^1\T))$, where $\Aa(-\bdy^0\T)$ and $\Aa(\bdy^1\T)$ are differential graded algebras associated to $-\bdy^0\T$ and  $\bdy^1\T$, respectively,  see \cite{pv}.  For a closable tangle, these two algebras are the same, and one can take the Hochschild homology of the bimodule, see \cite[Section 2.3.5]{bimod}. 
We show that this Hochschild homology is the knot Floer homology of the surgered closure of the tangle (see Theorems \ref{thm:gen} and \ref{thm:graded} for precise statements). 

\newtheorem{thmm}{Theorem}

\begin{thmm}\label{thm:main}
Let $(Y, \T, \gamma)$ be a closable strongly marked tangle. Then there is an equivalence
\[\mathit{HH}(\ctt(T, Y))\cong \widetilde\hfk(T_0, Y_{0(\gamma)}).\]
\end{thmm}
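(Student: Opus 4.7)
The plan is to mimic the strategy of the Lipshitz--Ozsv\'ath--Thurston self-pairing theorem for bordered Floer homology, adapted to the tangle Floer setting of \cite{pv}. The idea is to realize the algebraic Hochschild complex of $\ctt(Y,\T,\gamma)$ geometrically as the link Floer complex of a Heegaard diagram obtained by self-gluing a bordered-sutured diagram for the tangle.

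First, I would fix a nice Heegaard diagram $\HH = (\Sigma, \balpha, \bbeta, \XX, \OO)$ for $(Y, \T, \gamma)$ that computes $\ctt(Y,\T,\gamma)$ as a $\DA$ bimodule over $\Aa(-\bdy^0\T)$ and $\Aa(\bdy^1\T)$. Here $\Sigma$ has two boundary components, parametrized by the marked spheres $-\bdy^0\T$ and $\bdy^1\T$; by the closability hypothesis, these parametrizations are the same. I would then form the \emph{self-glued diagram} $\HH^{\circ}$ by identifying the two boundary components of $\Sigma$ via the identifying diffeomorphism, thereby producing a closed surface $\Sigma^{\circ}$. Standard bordered machinery adds a circle of $\alpha$-arcs' closures and a cylindrical neck; the framing $\lambda_\gamma$ appears naturally as a longitude for the tangle axis $\mu_\gamma$, and the arc $\gamma$ itself gives rise to an additional pair of basepoints corresponding to $\mu_\gamma$ in the surgered closure.

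Second, I would verify at the level of topology that $\HH^{\circ}$, equipped with the set of $\XX$- and $\OO$-basepoints inherited from $\HH$ together with the extra pair of basepoints on either side of the circle produced by self-gluing, is a multi-pointed Heegaard diagram for $(T_0, Y_{0(\gamma)})$, in which the new basepoint pair encodes precisely the tangle axis $\mu_\gamma$, and the $0$-framing on $\gamma$ is realized by the chosen gluing data $(\gamma,\lambda_\gamma)$. This step should be an essentially local computation in a neighborhood of the gluing region; the key point is that $0$-surgery on $\gamma$ with a meridian added corresponds exactly to the bordered self-gluing construction.

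Third, and this is the main analytic step, I would prove a \textbf{self-pairing theorem} at the chain level identifying
\[ \ct(\HH^{\circ}) \;\simeq\; \mathit{CH}_*\bigl(\ctt(Y,\T,\gamma)\bigr), \]
where the right-hand side is the Hochschild complex of the $\DA$ bimodule, as defined in \cite[Section 2.3.5]{bimod}. This is a direct analog of the Lipshitz--Ozsv\'ath--Thurston theorem that Hochschild homology of a type $\DA$ bimodule computes the Floer homology of the self-glued diagram. The proof should proceed by a neck-stretching argument along the circle created by the self-gluing: as the neck length tends to infinity, holomorphic curves in $\HH^{\circ}$ degenerate into holomorphic curves in $\HH$ with east-west asymptotics, and the combinatorics of matching these asymptotics with the Hochschild trace pairing is built into the definitions. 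Essentially all the hard analysis is already encoded in the pairing theorem of \cite{pv}; the self-pairing version follows formally by the same degeneration once one is careful that the asymptotic matching respects the cyclic trace.

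The main obstacle I expect is precisely this last step: verifying that the neck-stretching degeneration carries through to the \emph{self}-gluing case, in which the same copy of the boundary algebra is paired with itself. In the Lipshitz--Ozsv\'ath--Thurston theory this requires some care because the relevant idempotent summation is diagonal, which algebraically is the Hochschild trace; one needs to rule out new degenerations specific to the self-gluing and confirm that the extra basepoint pair for the tangle axis absorbs the boundary-parallel disks that would otherwise obstruct a clean identification. Given this, the grading statement (Theorem \ref{thm:graded}) should follow by tracking the Alexander and Maslov gradings through the Hochschild complex, using that the tangle axis $\mu_\gamma$ appears with its canonical meridional Alexander grading shift.
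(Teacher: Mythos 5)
Your proposal is topologically on the right track but differs from the paper's proof in two significant ways, one being a genuine route difference and one being a gap in the construction.

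\textbf{Route difference.} You propose the analytic neck-stretching strategy (degenerating holomorphic curves in the self-glued diagram as the gluing neck is stretched, and matching the resulting east--west asymptotics with the Hochschild trace). The paper instead reduces the problem entirely to combinatorics by choosing a \emph{nice} bordered Heegaard diagram $\HH$ for $(Y,\T,\gamma)$. For a nice diagram, the only nonzero structure maps are $\delta_1^1$ and $\delta_2^1$, both of which count explicit rectangles and bigons, and the Hochschild differential simplifies to $\widetilde\bdy=\epsilon\circ\delta_1^1+\epsilon\circ\delta_2^1\circ R\circ\delta_1^1$. The paper then matches, domain by domain, the three kinds of contributions (provincial rectangles/bigons, rectangles crossing $\bdy^0\HH$ fed through $R$, and the single-rectangle piece of $\delta_2^1$) with the rectangles and bigons of the self-glued diagram $\HH^\circ$. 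This is substantially more elementary than a degeneration argument and requires no new gluing analysis. Your approach could in principle work, but the degeneration step you flag as the ``main obstacle'' is precisely the piece the paper avoids having to carry out.

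\textbf{Gap in the construction.} Your description of how the self-glued surface becomes a Heegaard diagram for $(T_0, Y_0)$ is imprecise in a way that matters. A bordered Heegaard diagram for a tangle carries a pair of arcs $\zzz=\{\zzz_1,\zzz_2\}$ from one boundary component to the other (this is the data encoding the framed arc $\gamma$). After identifying $-\bdy^0\HH$ with $\bdy^1\HH$, these arcs close up into two curves $\zzz_1',\zzz_2'$ in the glued surface $\HH'$, and $\HH'$ is \emph{not} a valid Heegaard diagram as it stands. One must surger $\HH'$ along both $\zzz_1'$ and $\zzz_2'$; this produces four new regions, and one places \emph{four} new basepoints $X_1,O_1,X_2,O_2$ there (not ``an additional pair''). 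Topologically, the surgery along $\zzz_2'$ recovers $(Y'-N(\gamma'),T')$ and the surgery along $\zzz_1'$ is the Dehn filling realizing the $0$-surgery on $\gamma$; the four new basepoints then trace out exactly $\mu_\gamma$. Your account, which attributes the new basepoints directly to $\gamma$ without the intervening surgery on $\zzz_1',\zzz_2'$, skips the step that actually produces a valid Heegaard diagram and accounts for the basepoint count $2k+4$ appearing in Theorem \ref{thm:gen}. This needs to be filled in before either the combinatorial or the neck-stretching identification can be carried out.
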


Combined with a result of Baldwin-Grigsby \cite{bg}, we get the following corollary.

\newtheorem{corm}[thmm]{Corollary}
\begin{corm}\label{cor:braid}
Tangle Floer homology restricts to a faithful linear-categorical action of the braid group.
\end{corm}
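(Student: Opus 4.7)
The plan is to combine Theorem~\ref{thm:main} with a faithfulness result of Baldwin--Grigsby~\cite{bg}. The action of the braid group $B_n$ on tangle Floer homology assigns to a braid $\beta$ the DA bimodule $\ctt(\beta)$ over $\Aa(n)$; by the pairing theorem from \cite{pv}, composition of braids corresponds to tensor product of bimodules, so this really does define a linear-categorical action. Faithfulness then reduces to the claim that $\ctt(\beta_1) \simeq \ctt(\beta_2)$ as DA bimodules forces $\beta_1 = \beta_2$ in $B_n$.

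My first step is to specialize Theorem~\ref{thm:main} to the braid setting. For a braid $\beta \in \R^2 \times I$ viewed as a strongly marked $(n,n)$-tangle, the discussion preceding this corollary observes that the surgered closure is the link $\widehat{\beta} \cup A$ in $S^3$, where $\widehat{\beta}$ is the braid closure and $A$ is the braid axis. Theorem~\ref{thm:main} (and its graded refinement, Theorem~\ref{thm:graded}) then yields $\mathit{HH}(\ctt(\beta)) \cong \widetilde{\hfk}(\widehat{\beta} \cup A)$ as bigraded vector spaces.

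Next, if $\ctt(\beta_1) \simeq \ctt(\beta_2)$ as DA bimodules, then because Hochschild homology is a quasi-isomorphism invariant of bimodules, the two Hochschild homologies agree. Applying the specialization above to both sides produces a bigraded isomorphism $\widetilde{\hfk}(\widehat{\beta_1} \cup A) \cong \widetilde{\hfk}(\widehat{\beta_2} \cup A)$. Baldwin--Grigsby~\cite{bg} proved that the bigraded knot Floer homology of a braid closure together with its axis detects the braid, i.e., the assignment $\beta \mapsto \widetilde{\hfk}(\widehat{\beta} \cup A)$ is injective on $B_n$. Combining these, $\beta_1 = \beta_2$, so the action is faithful.

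The main obstacle will be keeping the gradings straight. Baldwin--Grigsby's faithfulness relies on the full bigrading (Alexander and Maslov), so the equivalence produced by Theorem~\ref{thm:main} must be refined to respect both gradings; this is what the graded statement Theorem~\ref{thm:graded} is expected to supply. The nontrivial bookkeeping is then to verify that our grading conventions align with those of \cite{bg} under the identification of the surgered closure with $\widehat{\beta} \cup A$, after which the corollary is immediate.
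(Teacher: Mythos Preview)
Your overall strategy---apply the self-pairing theorem and then invoke Baldwin--Grigsby---matches the paper's, but the key citation is misstated in a way that leaves a genuine gap. Baldwin--Grigsby do \emph{not} prove that $\beta \mapsto \widetilde{\hfk}(\widehat\beta\cup A)$ is injective on $B_n$. What \cite[Theorem 1(b)]{bg} gives (in the paper's notation) is: if $T$ is a \emph{pure} braid and there is a \emph{triply} graded isomorphism $\widehat{\mathit{HFL}}(T_0,0)\cong\widehat{\mathit{HFL}}((\mathbbm 1_n)_0,0)$, then $T=\mathbbm 1_n$. So two repairs are needed. First, you must use the trigrading $(M,A',A_0)$ of Theorem~\ref{thm:graded}, not merely the $(M,A)$ bigrading; the axis Alexander grading $A_0$ is essential to \cite{bg}. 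Second, and more seriously, you must reduce to a comparison with the \emph{trivial} braid and arrange that the braid in question is pure.

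The paper handles this as follows. Starting from the assumption that $-\boxtimes\ctt(B)$ is the identity functor, one recovers (via \cite{seidel}) that $\ctt(B)\simeq\ctt(\mathbbm 1_n)$. Choose $k\geq 1$ with $B^k$ pure; then $\ctt(B^k)\simeq\ctt(\mathbbm 1_n)$ by iterated box tensor, and taking Hochschild homology and applying Theorem~\ref{thm:graded} gives the triply graded isomorphism needed for \cite{bg}. Hence $B^k=\mathbbm 1_n$, and torsion-freeness of $B_n$ forces $B=\mathbbm 1_n$. Your argument omits the pure-braid hypothesis and the passage through a power, so as written it does not go through; once you insert these steps (and upgrade ``bigraded'' to ``triply graded''), the proof is complete and essentially coincides with the paper's.
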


\subsection*{Acknowledgments}
We thank Robert Lipshitz for a helpful conversation,  the referee for suggesting Corollary \ref{cor:braid}, and Mohammed Abouzaid and Ailsa Keating for a useful discussion of $A_{\infty}$-categories.
\section{Algebra review}

Let $A$ be a unital differential graded algebra over a ground ring ${\bf k}$, where  ${\bf k}$ is a direct sum of copies of $\F_2 = \Z/2\Z$.  The unit gives a preferred map $\iota:{\bf k}\to A$.
We assume that  $A$ is \emph{augmented}, i.e. there is a map $\epsilon:A\to {\bf k}$ such that $\epsilon(1) =1$, $\epsilon(ab) = \epsilon(a)\epsilon(b)$, and $\epsilon(\bdy a) = 0$. The \emph{augmentation ideal} $\ker\epsilon$ is denoted by $A_+$.

A \emph{type $\DA$ bimodule over $(A,A)$} is a graded  ${\bf k}$-bimodule $N$, together with degree $0$, ${\bf k}$-linear maps
\[\delta^1_{1+j}: N\otimes A[1]^{\otimes j}\to A\otimes N[1],\]
satisfying a certain compatibility condition, see \cite[Definition 2.2.42]{bimod}. 

A $\DA$ bimodule is \emph{bounded} if the structure maps behave in a certain nice way, see \cite[Definition 2.2.45]{bimod}. We will not recall the complete definition of boundedness here, but we point out that the structures arising from nice Heegaard diagrams are bounded, and moreover the only nonzero structure maps in that case are $\delta_1^1$ and $\delta_2^1$. We will call a $\DA$ bimodule \emph{nice} if it is bounded and $\delta^1_i = 0$ for all $i>2$.

Given a bounded type $\DA$ bimodule $N$ over $(A, A)$, one can define a chain complex $(N^{\circ}, \widetilde\bdy)$ whose homology agrees with the Hochschild homology of the $\ainf$-bimodule $A\boxtimes N$ corresponding to $N$, see \cite[Section 2.3.5]{bimod}. The vector space $N^{\circ}$, called the the \emph{cyclicization}  of $N$,  is  the quotient $N/[N, {\bf k}]$, where $[N, {\bf k}]$ is the submodule of $N$ generated by elements $xk-kx$, for $x\in N$ and $k\in {\bf k}$. The differential $\widetilde\bdy$ is easy to describe when $N$ is nice. We recall the construction in this special case below.

Define a cyclic rotation map $R: (A\otimes N)^{\circ}\to (N\otimes A_+)^{\circ}$
by 
\[R(a\otimes x) = x\otimes [(\id-\iota\circ\epsilon)(a)]\]
The map 
$\epsilon\otimes \id:A\otimes N\to {\bf k}\otimes N = N$
descends to a map $(A\otimes N)^{\circ}\to N^{\circ}$, which we will also denote $\epsilon$. We denote the cyclicizations of $\delta_1^1:N\to A\otimes N$ and $\delta_2^1:N\otimes A_+\to A\otimes N$ by  $\delta_1^1$ and $\delta_2^1$ as well. 
Finally, $\widetilde\bdy$ is defined as 
$$\widetilde\bdy =\epsilon\circ\delta_1^1 + \epsilon\circ \delta_2^1\circ R\circ \delta _1^1.$$

Given a tangle $(Y, \T, \gamma)$, one can represent it by a multipointed bordered Heegaard diagram $\HH$ with two boundary components $\bdy^0\HH$ and $\bdy^1\HH$, see \cite[Section 8.2]{pv}. To $-\bdy^0\HH$ and $\bdy^1\HH$ one associates differential algebras $\Aa(-\bdy^0\T)$ and $\Aa(\bdy^1\T)$, and to $\HH$ a $\DA$ bimodule $\ctt(\HH)$ over $\Aa(-\bdy^0\T)$ and $\Aa(\bdy^1\T)$. The structure maps on the bimodule are obtained by counting certain holomorphic curves in  $\HH\times I\times \R$. See \cite[Sections 7.2 and 10.3]{pv} For a tangle in $\R^2\times I$, the bimodule can also  be defined in terms of sequences of strand diagrams corresponding to a decomposition of the tangle into elementary pieces, see \cite[Sections 3 and 5.2]{pv}. We do not recall the two constructions here, but refer the reader to  \cite{pv}.

\section{Proofs of the main results}\label{sec:proof}

We provide the full statements of the main theorem in the general case (ungraded), and in the case of $\R^2\times I$, and  prove both  via nice diagrams.

Tangle Floer homology is an invariant of the tangle---if $\HH_1$ and $\HH_2$ are Heegaard diagrams for $(Y, \T, \gamma)$ with $2k_1$ and $2k_2$ basepoints, respectively, and $k_1\geq k_2$, then $\ctt(\HH_1)\simeq\ctt(\HH_2)\otimes (\F_2\oplus\F_2)^{\otimes(k_1-k_2)}$.  For a closable tangle, $-\bdy^0\T\cong \bdy^1\T$, so  the algebras $\Aa(-\bdy^0\T)$ and $\Aa(\bdy^1\T)$ are the same, and one can take the Hochschild homology of the bimodule. 

\begin{theorem}\label{thm:gen}
If $\HH$ is Heegaard diagram with $2k$ basepoints for a  closable strongly marked tangle $(Y, \T, \gamma)$, and  $\HH^{\circ}$ is a diagram with $2k+4$ basepoints for $(T_0, Y_0)$, for some $k$, then
\[\mathit{HH}(\ctt(\HH))\cong \widetilde\hfk(\HH^{\circ}).\]
\end{theorem}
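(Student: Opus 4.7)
The plan is to prove Theorem \ref{thm:gen} by the nice-diagrams strategy: choose representative diagrams in which all holomorphic curve counts reduce to combinatorial counts of empty rectangles and bigons, and then exhibit an explicit bijection between the Hochschild complex of $\ctt(\HH)$ and the knot Floer chain complex $\widetilde\cfk(\HH^{\circ})$ that identifies differentials on the nose. Invariance of both sides under Heegaard moves and stabilizations then upgrades this to the stated equivalence for any representative diagrams.

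First, I would fix a nice multipointed bordered Heegaard diagram $\HH$ with $2k$ basepoints for $(Y, \T, \gamma)$, so that $\ctt(\HH)$ has only $\delta_1^1$ and $\delta_2^1$ nonzero. Since $(Y, \T, \gamma)$ is closable, the two boundary components of $\HH$ carry the same pointed matched structure, and I can form a specific Heegaard diagram $\HH^{\circ}$ for $(T_0, Y_0)$ by gluing $\bdy^0\HH$ to $\bdy^1\HH$ via the closure identification and then inserting additional $\alpha$- and $\beta$-arcs together with four new basepoints along the gluing region to encode the $0$-framed surgery on $\gamma'$ and its meridian $\mu_\gamma$. A direct check using the surgery description of $Y'_0(\gamma')$ and the standard Heegaard picture of a meridian confirms that $\HH^{\circ}$ is a valid diagram for $(T_0, Y_0)$ with $2k+4$ basepoints.

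Next I would set up the generator correspondence. Generators of $\ctt(\HH)$ are tuples of intersection points with compatible left and right idempotents, and passing to the cyclicization $\ctt(\HH)^{\circ} = \ctt(\HH)/[\ctt(\HH), \k]$ amounts to restricting to those generators whose left and right idempotents agree. On the closed side, generators of $\widetilde\cfk(\HH^{\circ})$ are tuples occupying every $\alpha$- and $\beta$-circle once; after the gluing, the matching-idempotent condition translates precisely into the requirement that a tuple from $\HH$ extend across the gluing region to occupy the newly introduced $\alpha$- and $\beta$-arcs in exactly one way. This produces a canonical bijection between a basis of $\ctt(\HH)^{\circ}$ and a basis of $\widetilde\cfk(\HH^{\circ})$.

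Finally I would match the differentials under this bijection. In the nice diagram $\HH^{\circ}$, the differential counts empty embedded rectangles and bigons disjoint from the $w$-basepoints, and these split by whether their support meets the gluing region or not. Domains avoiding the gluing region lie entirely inside $\HH$ and correspond to terms in $\delta_1^1$ with trivial algebra output, thus contributing the $\epsilon\circ\delta_1^1$ summand of $\widetilde\bdy$. Domains crossing the gluing region decompose into two half-domains: one outputs a nontrivial algebra chord along the matched arcs via $\delta_1^1$, and the other absorbs the same chord via $\delta_2^1$, with the cyclic rotation $R$ exactly encoding the transport of that algebra element from one copy of the boundary to the other. The main obstacle will be a careful local analysis near the gluing region, showing that no rectangle or bigon of $\HH^{\circ}$ can cross the gluing region more than once in a nice enough $\HH$, and that the algebra outputs on the two half-domains pair up uniquely, so that the Hochschild count $\epsilon\circ\delta_2^1\circ R\circ\delta_1^1$ agrees term-by-term with the boundary-crossing contributions to the $\widetilde\cfk$ differential, with no overcounting; this is the analog for tangle Floer homology of the self-pairing argument for bordered Floer homology, but complicated here by the interplay between the tangle endpoints, the new axis basepoints, and the idempotent matching that records the linking of $T'$ with $\mu_\gamma$.
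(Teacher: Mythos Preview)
Your proposal follows essentially the same strategy as the paper: reduce to a nice diagram, build a specific closed diagram $\HH^{\circ}$ by self-gluing along the boundary, identify generators via the matching-idempotent condition, and match the Hochschild differential $\widetilde\bdy = \epsilon\circ\delta_1^1 + \epsilon\circ\delta_2^1\circ R\circ\delta_1^1$ with the $\widetilde\cfk$ differential according to whether a rectangle crosses the seam.

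One point where the paper is cleaner than what you sketch: in forming $\HH^{\circ}$ the paper does \emph{not} insert any new $\alpha$- or $\beta$-curves. After identifying $-\bdy^0\HH$ with $\bdy^1\HH$, the $\alpha$-arcs already close up into $\alpha$-circles; one then surgers the resulting surface along the two closed curves $\zzz_1',\zzz_2'$ coming from the basepoint arcs and drops the four new basepoints $X_1,O_1,X_2,O_2$ into the four resulting regions. With this construction the generator bijection is literal---generators of $\HH^{\circ}$ are exactly those $\xx\in\mathfrak S(\HH)$ with $\bar o^0(\xx)=o^1(\xx)$---and your anticipated ``main obstacle'' dissolves: a nice rectangle touching $\bdy^0\HH$ outputs an algebra element with a single moving strand, so $\delta_2^1$ applied to it counts a single rectangle touching $\bdy^1\HH$, and the pair glues to exactly one rectangle in $\HH^{\circ}$ crossing the seam once.
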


Note that in \cite{pv} gradings for tangle Floer homology are only defined when the underlying manifold is $S^2\times I$ or  $B^3$, so Theorem \ref{thm:gen} only claims an ungraded isomorphism.

\begin{proof} The proof is very similar to  that of \cite[Theorem 14]{bimod}.
By invariance under Heegaard moves, it suffices to prove the theorem for one choice of  $\HH$,  and one choice of $\HH^{\circ}$ with $4$ more basepoints than $\HH$. 

Let $(Y, \T, \gamma)$ be a closable strongly marked $(n,n)$-tangle, and let $\HH= (\Sigma, \balpha, \bbeta, \XX, \OO, \zzz)$ be a nice bordered Heegaard diagram for  $(Y, \T, \gamma)$, as in \cite[Proposition 12.1]{pv}. Glue $\HH$ to itself  by identifying $-\bdy^0\HH$ and $\bdy^1\HH$, and call the result $\HH'$ (note this is not a valid Heegaard diagram).  Recall that $\zzz = \{\zzz_1, \zzz_2\}$ is a set of two arcs in $\Sigma\setminus (\balpha\cup \bbeta)$ with boundary on  $\bdy\Sigma \setminus \balpha$, oriented from the left to the right boundary, and  let $\zzz_1'$ and  $\zzz_2'$ be the resulting closed curves in $\HH'$. Surger $\HH'$ along $\zzz_1'$ and  $\zzz_2'$, and place $4$ basepoints in the $4$ resulting regions: $X_1$, $O_1$, $X_2$, and $O_2$ in the region whose boundary contains $a_{n+1}^0$,$a_1^0$, $a_{2n+2}^0$, and $a_{n+2}^0$, respectively. The result is a diagram  $\HH^{\circ} = (\Sigma^{\circ}, \balpha^{\circ}, \bbeta^{\circ}, \XX^{\circ}, \OO^{\circ})$, see Figure \ref{fig:generic}. 

\begin{figure}[h]
 \centering
      \labellist   
       \pinlabel $\textcolor{red}{a_1^0}$ at -15 160
       \pinlabel $\textcolor{red}{a_{n+1}^0}$ at -20 215  
       \pinlabel $\textcolor{red}{a_{n+2}^0}$ at -22 40
       \pinlabel $\textcolor{red}{a_{2n+2}^0}$ at -27 86  
       \pinlabel $\textcolor{red}{a_1^1}$ at 252 160
       \pinlabel $\textcolor{red}{a_{n+1}^1}$ at 262 215  
       \pinlabel $\textcolor{red}{a_{n+2}^1}$ at 261 40
       \pinlabel $\textcolor{red}{a_{2n+2}^1}$ at 265 86  
       \pinlabel $\textcolor{green}{\zzz_1}$ at 120 110
       \pinlabel $\textcolor{green}{\zzz_2}$ at 120 236
        \pinlabel \textcolor{gray}{$A$} at 120 185
        \pinlabel \textcolor{gray}{$B$} at 120 60 
        \pinlabel \textcolor{gray}{$A$} at 420 185
        \pinlabel \textcolor{gray}{$B$} at 420 60 
        \pinlabel \textcolor{gray}{$A$} at 685 185
        \pinlabel \textcolor{gray}{$B$} at 685 60 
        \pinlabel {\footnotesize $X_1$} at 784 220 
        \pinlabel {\footnotesize  $O_1$} at 784 155 
        \pinlabel {\footnotesize  $X_2$} at 784 92 
        \pinlabel {\footnotesize  $O_2$} at 784 29
     \endlabellist
      \hspace{.7cm} \includegraphics[scale=.55]{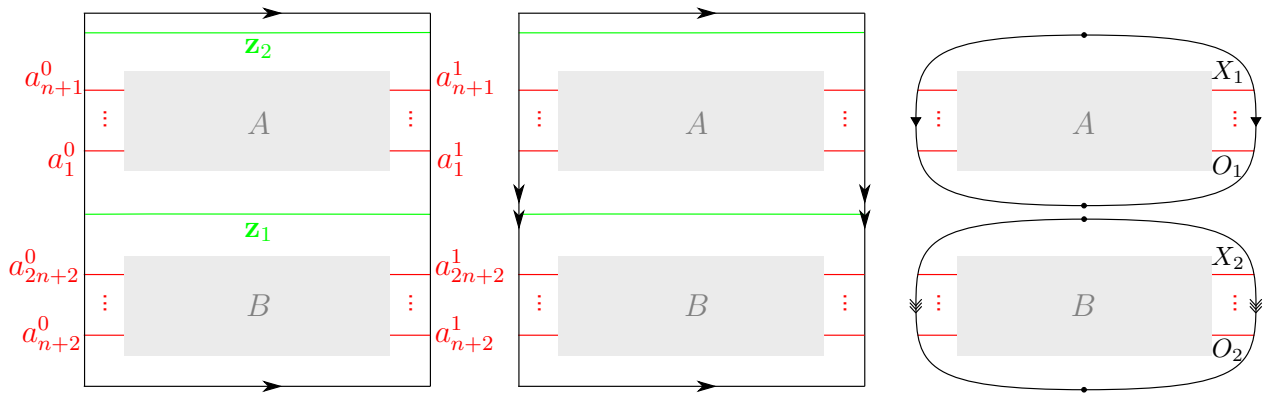} 
       \vskip .1 cm 
       \caption{Left: A Heegaard diagram $\HH$ for a closable tangle $(Y, \T, \gamma)$. The left edge is $\bdy^0\HH$ and the right edge is $\bdy^1\HH$. Middle: The  diagram $\HH'$ obtained by self-gluing $\HH$. Right: The Heegaard diagram $\HH^{\circ}$ obtained by surgery on the two green curves on $\HH'$.}\label{fig:generic}
\end{figure}

Recall that originally $(Y,\T)$ was obtained by attaching a two handle to the pair $(Y_{\mathit{dr}},\T_{\mathit{dr}})$ that we got from a drilled diagram on $\Sigma-N(\zzz_2)$ by the usual handle attachments. Thus, the Heegaard diagram $\HH''$ that we get by only doing surgery on $\HH'$ along $\zzz_2'$ describes $(Y'-N(\gamma'),T')$, with $\zzz_1'$ lying on the boundary of $N(\gamma')$. Now the surgery along $\zzz_1'$ on $\HH''$ simply results in the Dehn filling of $(Y'-N(\gamma'),T')$ with framing $\zzz_1'$. The knot described by $X_1$, $O_1$, $X_2$, and $O_2$ goes through the former $\zzz_2$ and $\zzz_1$ on the Heegaard diagram, thus indeed describes a meridian for $T'$. This means that $\hc$ is a Heegaard diagram for $(T_0, Y_0)$.  

Observe that the generators of $\HH'$, or equivalently the generators of $\HH^{\circ}$,  correspond to generators $\xx$ of $\HH$ with $\bar o^0(\xx) = o^1(\xx)$, where as in \cite{pv}  $o^1(\xx)$ (respectively $\bar o^0(\xx)$) denotes the set of $\balpha$ arcs that are occupied (not occupied) by $\xx$ on $\balpha^1$ (and $\balpha^0$).

Denote the algebra $\Aa(\bdy^1\T)\cong \Aa(-\bdy^0\T)$ by $\Aa$, and its ring of idempotents by ${\bf k}$. Recall that $\Aa$ has a basis over $\F_2$ consisting of strand diagrams \cite[Section 7]{pv}. 
We define the augmentation map $\epsilon:\Aa\to {\bf k}$ on this basis explicitly: it is the identity on generators in  ${\bf k}\subset \Aa$ and zero on generators $a\notin{\bf k}\subset \Aa$.
The structure maps on the $\DA$ bimodule $\ctt(\HH)$ count the following types of domains (see \cite[Sections 10 and 12]{pv}):
\begin{enumerate}
\item empty provincial rectangles and bigons. These contribute to $\delta_1^1$,  with image in ${\bf k}\otimes \ctt(\HH)\subset \Aa\otimes \ctt(\HH)$. 
\item empty rectangles that intersect $\bdy^0\HH$ (the left boundary of $\HH$). These  contribute to $\delta_1^1$, with image in $\Aa_+\otimes \ctt(\HH)\subset \Aa\otimes \ctt(\HH)$. 
\item sets of empty rectangles, each of which  intersects $\bdy^1\HH$ (the right boundary of $\HH$). These 
comprise $\delta_2^1$, whose image is entirely contained in ${\bf k}\otimes \ctt(\HH)$.
\end{enumerate}

The differential on the Hochschild complex $(\ctt(\HH)^{\circ}, \widetilde\bdy)$ then counts the following domains on $\HH$. The map $\epsilon\circ\delta_1^1$ counts provincial rectangles and bigons, and then forgets the idempotent component of the output. These are exactly the empty rectangles and bigons in $\HH^{\circ}$ that do not cross $-\bdy^0\HH = \bdy^1\HH$.  The map $\epsilon\circ \delta_2^1\circ R\circ \delta _1^1$ counts rectangles as follows. Since the rotation map $R$ is zero on elements $e\otimes x$ with $e\in {\bf k}$, only the part of $\delta_1^1$ that counts domains of Type $(2)$ contributes. Thus, the image of $R\circ \delta_1^1$ is generated by elements of form $y\otimes a$, where $a\in \Aa_+$ is a generator with only one moving strand.  Thus, the part of $\delta_2^1$ that contributes to $\epsilon\circ \delta_2^1\circ R\circ \delta _1^1$ counts individual empty rectangles that intersect $\bdy^1\HH$. To sum up,  $\epsilon\circ \delta_2^1\circ R\circ \delta _1^1$ counts pairs of empty rectangles, one with an edge on $\bdy^0\HH$ and one with an edge on $\bdy^1\HH$, which glue up to a rectangle after the identification $-\bdy^0\HH \sim \bdy^1\HH$. These are exactly the empty rectangles in $\HH^{\circ}$ that cross $-\bdy^0\HH \sim \bdy^1\HH$.

Thus, $(\ctt(\HH)^{\circ}, \widetilde\bdy)\cong \widetilde\cfk(\HH^{\circ})$.
\end{proof}

For the special case of a tangle $T$ in $\R^2\times I$, we state a graded version. In this case,  $\mathit{HH}(\ctt(\HH))$ inherits the Maslov and Alexander gradings $M$ and $A$ from $\ctt(\HH)$, and also carries a \emph{strands} grading $S$ (counting the number of occupied $\alpha$-arcs that touch $\bdy^1\HH$). 
If $\HH_1$ and $\HH_2$ are Heegaard diagrams for $T$ with $2k_1$ and $2k_2$ basepoints, respectively, and $k_1\geq k_2$, then $\ctt(\HH_1)\simeq\ctt(\HH_2)\otimes (\F_2\oplus\F_2)^{\otimes(k_1-k_2)}$, where each tensor factor $\F_2\oplus\F_2$ has one summand in $(M, A, S)$ trigrading $(0,0, 0)$ and one in trigrading $(-1,-1, 0)$.

Let $l$ be the number of components of  $T_0$, and label the components $L_0 =\mu_{\gamma}, L_1, \ldots, L_{l-1}$. The link Floer homology of the Heegaard diagram $\HH^{\circ}$ for  $T_0$ described earlier, $\widetilde{\mathit{HFL}}(\HH^{\circ})$,  is multigraded, with Maslov grading $M$ in $\Z + \frac{l-1}{2}$ and Alexander multigrading $(A_0, \ldots, A_{l-1})$ in $(\frac 1 2 \Z)^l$, with each  $\frac 1 2 \Z$ factor corresponding to a component of the link  \cite{oszlink}. Let $\widetilde{\mathit{HFL}}(\HH^{\circ}, 0)$ be $\widetilde{\mathit{HFL}}(\HH^{\circ})$ with multigrading collapsed to a trigrading by $M$, $A_0$, and $A' := A_1+\cdots A_{l-1} = A-A_0$ (here $A = A_0+\cdots+A_{l-1}$ is the Alexander grading on $\widetilde{\mathit{HFK}}(\HH^{\circ})$).

\begin{theorem}\label{thm:graded}
Let $\HH$ be a Heegaard diagram with $2k$ basepoints for a tangle $T$ in $\R^2\times I$ with  $-\bdy^0T \cong \bdy^1T$, and let $\HH^{\circ}$ be a diagram with $2k+4$ basepoints for the surgered closure $T_0$, with $4$ of the basepoints corresponding to the component $\mu_{\gamma}$. Then there is an isomorphism
\[\mathit{HH}(\ctt(\HH))\cong \widetilde{\mathit{HFL}}(\HH^{\circ},0)\]
which respects the trigrading in the following sense. If the isomorphism maps a homogeneous element $\xx\in \mathit{HH}(\ctt(\HH))$ to an element $\yy\in \widetilde{\mathit{HFL}}(\HH^{\circ}, 0)$, then $\yy$ is homogeneous and 
\begin{align*}
M(\yy) &= M(\xx) + S(\xx)-a-1\\
A'(\yy) &= A(\xx) -S(\xx)+ \frac l 2  +n-a-1\\
A_0(\yy) &=  S(\xx) -\frac{n+1}{2},
\end{align*}
where  $n=|\bdy^1T|$,   $a$ is the number of positively oriented points in $\bdy^1T$, and $l$ is the number of components of  $T_0$.
\end{theorem}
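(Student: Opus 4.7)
The plan is to leverage the proof of Theorem \ref{thm:gen} as applied to nice diagrams $\HH$ and $\HH^{\circ}$: that proof already provides a bijection between generators of $\ctt(\HH)^{\circ}$ and generators of $\widetilde\cfk(\HH^{\circ})$ under which the two differentials agree. Since both sides are generated by the same objects and have matching differentials, relative gradings on both sides automatically coincide, and the problem reduces to verifying the three claimed affine shifts between absolute gradings on a single reference generator in each Spin$^c$-class.

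I would first handle $A_0(\yy)$. The tangle-axis component $\mu_\gamma$ of $T_0$ is, by construction, the knot traced by the inserted quadruple $X_1, O_1, X_2, O_2$, placed in prescribed regions adjacent to the identified equator $-\bdy^0\HH \sim \bdy^1\HH$. Consequently the difference of $\mathbb X$ and $\mathbb O$ counts used to define $A_0$ depends on $\yy$ only through the number of $\alpha$-arcs at $\bdy^1\HH$ occupied by the underlying tangle generator $\xx$, which is exactly $S(\xx)$. The constant $-(n+1)/2$ is then fixed by the standard symmetrization convention making $A_0$ take values in $\tfrac{1}{2}\Z$. Next, since $A = A_0 + A'$, the formula for $A'(\yy)$ follows by comparing the total Alexander grading on $\HH^{\circ}$ with the tangle Alexander grading $A(\xx)$ from \cite{pv}; the two differ by a boundary correction depending on $n$, $a$, and $l$, and subtracting the expression for $A_0$ yields the claimed shift.

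The Maslov shift $S(\xx)-a-1$ is the main obstacle. The tangle Floer Maslov grading of \cite{pv} carries an idempotent-dependent correction at each boundary of $\HH$, which must be reconciled with the standard Maslov grading on the closed diagram $\HH^{\circ}$. My plan is to track the corrections coming from $\bdy^0\HH$ and $\bdy^1\HH$ separately: the identification $-\bdy^0\HH \sim \bdy^1\HH$ combines the two into a single contribution determined by $S(\xx)$ and by the orientation data on $\bdy^1\T$ (captured by $a$), while the surgery introducing $\mu_\gamma$ contributes the residual $-1$. Once the shift is shown to be generator-independent (which follows from the relative-grading agreement above), pinning it down amounts to an explicit check on one reference generator, for instance the generator supported on a standard near-boundary matching in a model Heegaard diagram for the identity tangle.
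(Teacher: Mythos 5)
The high-level plan (reduce to comparing a single reference generator, using the differential-matching from Theorem~\ref{thm:gen}) is in the right neighborhood, but there are two genuine gaps.

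First, the claim that ``relative gradings on both sides automatically coincide'' because the generators and differentials match is not valid. A chain complex over $\F_2$ does not determine its $\Z$-gradings: for instance a contractible summand admits any shift, and in both $\ctt(\HH)$ and $\widetilde\cfk(\HH^\circ)$ the relative grading is a domain-based invariant, not something you can read off from $\widetilde\bdy$ alone. Indeed, the paper does \emph{not} assert that relative gradings agree in general; it proves that for two generators $\xc,\yc$ in the same strand class $\mathfrak S_i(\HH^\circ)$ a connecting domain $B$ can be arranged to lie in the nice part of $\HH^\circ$, lifts to a domain $B'$ in $\HH$, and has $M(B)=M(B')$ and $A(B)=A(B')$; this is a computation with Euler measures, point multiplicities, and boundary multiplicities, not a formal consequence of the differential.

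Second, and more seriously, the ``shift'' is \emph{not} generator-independent: the Maslov correction is $S(\xx)-a-1$, so it varies with $S(\xx)$. Your sketch correctly anticipates that the identification of boundaries contributes something controlled by $S(\xx)$ and $a$, but then reverts to treating the shift as a constant to be pinned down on one reference generator. That is insufficient. Between consecutive strand classes $\mathfrak S_i$ and $\mathfrak S_{i+1}$ the paper must, and does, find an explicit connecting domain (the $4g$-gon $R_{n+1}$) and verify $M(\xc_{i+1})-M(\xc_i)=1$, $A(\xc_{i+1})-A(\xc_i)=0$, $A_0(\xc_{i+1})-A_0(\xc_i)=1$ on the closed side, while on the tangle side $M(\xx_{i+1})=M(\xx_i)$, $A(\xx_{i+1})=A(\xx_i)$ via a combinatorial exchange of strands on the shadow presentation. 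This cross-Spin$^c$ comparison is the heart of the $S(\xx)$-dependent term and is missing from the proposal. Only after that does a single absolute normalization (the paper uses the extreme generator $\xc_\OO$ and a holomorphic triangle argument to get $M(\xc_\OO)=-k-1$, combined with $M(\xx_\OO)=-k$) fix the remaining constant. Without the cross-class comparison, the computation on a lone reference generator does not determine the formula. Similarly, the $A_0$ constant is computed in the paper by an explicit disk $D$ bounded by a representative of $L_0$; appealing to ``the standard symmetrization convention'' is plausible but should be replaced by a concrete domain computation, since the convention normalizes the \emph{total} $A_0$-polytope, not a single generator's value.
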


\begin{proof} 
Again, by invariance under Heegaard moves, it suffices to prove the theorem for any one specific choice of $\HH$ and $\HH^{\circ}$ with the prescribed relative number of basepoints. We already discussed the isomorphism in the proof of Theorem \ref{thm:gen}. It remains to identify the gradings. 

Let $\HH$ be a Heegaard diagram for $T$ obtained by plumbing annular bordered grid diagrams, as in \cite[Section 4]{pv}. By gluing on a diagram for the straight strands $\bdy^1T\times I$ if necessary, we may assume that $\HH$ has even genus, which we denote by $2g$ (this makes for an easier gradings argument). See the top diagram in Figure \ref{fig:grid}. 
We modify $\HH$ to a diagram $\HH^{\circ}$ for $T_0$, as in the proof of Theorem \ref{thm:gen}. See the bottom diagram in Figure \ref{fig:grid}. 

\begin{figure}[h!]
 \centering
      \labellist   
       \pinlabel ${\bdy^0 \HH}$ at -25 514
       \pinlabel $\textcolor{red}{a_1^0}$ at -2 555
       \pinlabel $\textcolor{red}{a_{n+1}^0}$ at 15 584  
       \pinlabel $\textcolor[rgb]{1,.5,.5}{a_{n+2}^0}$ at 12 472
       \pinlabel $\textcolor[rgb]{1,.5,.5}{a_{2n+2}^0}$ at -4 446 
       \pinlabel $\textcolor{red}{a_1^1}$ at 518 555
       \pinlabel $\textcolor{red}{a_{n+1}^1}$ at 542 584  
       \pinlabel $\textcolor{red}{a_{n+2}^1}$ at 542 472
       \pinlabel $\textcolor{red}{a_{2n+2}^1}$ at 535 446  
       \pinlabel $\textcolor{green}{\zzz_1}$ at 285 465
       \pinlabel $\textcolor[rgb]{.7,1,.7}{\zzz_2}$ at 285 540
        \pinlabel \textcolor[rgb]{.7,.7,.7}{\scriptsize ${G_1}$} at 76 506
        \pinlabel \textcolor[rgb]{.7,.7,.7}{\scriptsize ${G_2}$} at 115 506 
        \pinlabel \textcolor[rgb]{.7,.7,.7}{\scriptsize ${G_{4g}}$} at 464 506    
        \pinlabel ${\bdy^1 \HH}$ at 562 514
        \pinlabel \textcolor{red}{$\dots$} at 260 120 
        \pinlabel \textcolor{red}{$\dots$} at 260 230 
        \pinlabel \textcolor{red}{$\dots$} at 260 460 
        \pinlabel \textcolor{red}{$\dots$} at 260 570 
        \pinlabel $\dots$ at 260 175 
        \pinlabel \rotatebox{90}{$\boldsymbol{\dots}$} at 132 200 
        \pinlabel \rotatebox{90}{$\boldsymbol{\dots}$} at 308 200 
        \pinlabel \textcolor{gray}{$\dots$} at 260 510 
        \pinlabel \textcolor{gray}{{\small $X_1$}} at 509 256 
        \pinlabel \textcolor{gray}{\small $O_1$} at 486 198 
        \pinlabel {\small $X_2$} at 471 95 
        \pinlabel {\small $O_2$} at 499 147
     \endlabellist
       \includegraphics[scale=.59]{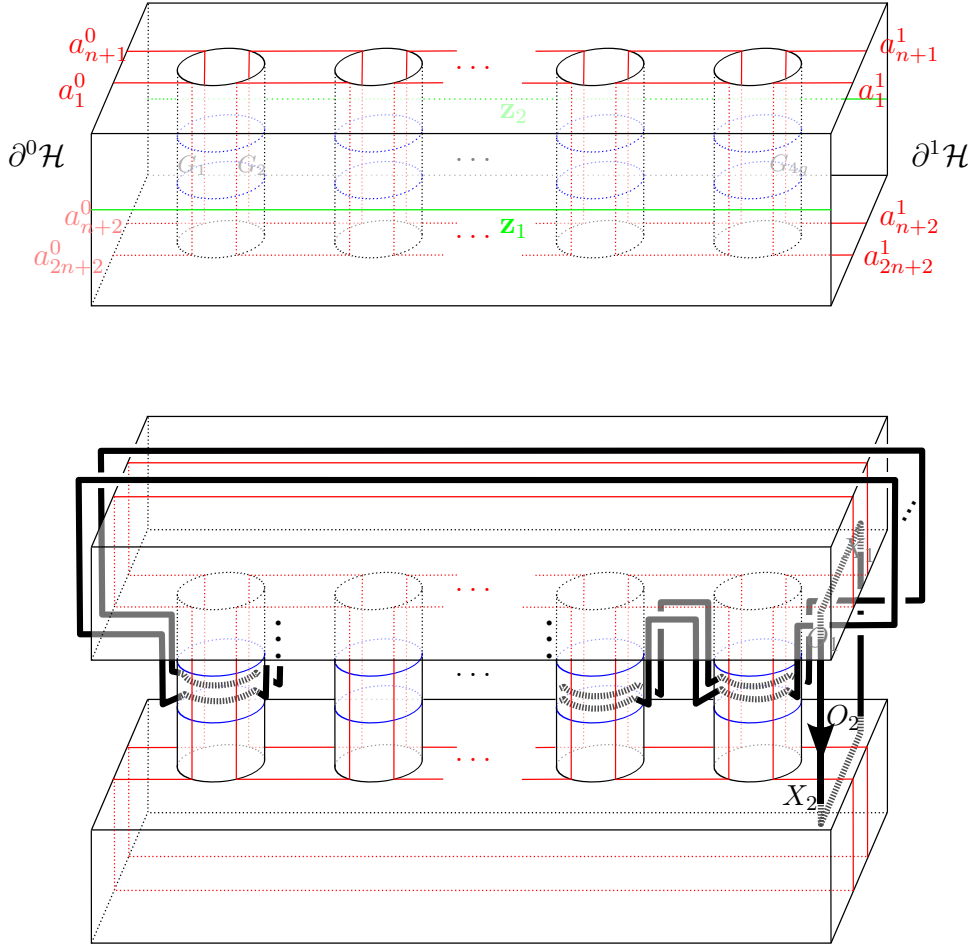} 
       \vskip .1 cm 
       \caption{Top: A diagram for $T$ in $S^2\times I$. There are two grids on each vertical annulus, $X$s and $O$s omitted for simplicity.  Bottom: The corresponding diagram for $T_0$ in $S^3$, along with $T_0$ in bold black.}\label{fig:grid}
\end{figure}

Call the part of $\hc$ away from the four regions resulting from the surgery on $\zzz_i'$ the \emph{nice} part of $\hc$ (this is the part that is the plumbing of grid diagrams).  As in Figure \ref{fig:generic}, we can draw $\hc$ on the plane, as the union of two $2g$-punctured disks  with certain identifications of the boundary, see Figure \ref{fig:hhd}. As seen on Figure \ref{fig:hhd}, we refer to the top/bottom disk as the \emph{top/bottom half} of  $\hc$, respectively. 

Denote the set of generators of $\HH$ by $\mathfrak S(\HH)$, and the subset of generators with $i$ occupied $\alpha$-arcs on the right by $\mathfrak S_i(\HH)$. Denote the subsets of  $\mathfrak S(\HH)$ and  $\mathfrak S_i(\HH)$ that correspond to generators of $\HH^{\circ}$ by $\mathfrak S(\HH)^{\circ}$ and  $\mathfrak S_i(\HH)^{\circ}$, and the corresponding sets of generators of $\HH^{\circ}$ by $\mathfrak S(\HH^{\circ})$ and  $\mathfrak S_i(\HH^{\circ})$, respectively. For a generator $\xx\in \mathfrak S(\HH)$,  denote the corresponding generator in $\mathfrak S(\hc)$ by $\xc$. Define a  \emph{strands} grading on generators by $S(\xc_i)= i$ for $\xc_i\in \mathfrak S_i(\HH^{\circ})$. 

Since $\HH^{\circ}$ is a diagram for $S^3$, any two generators are connected by a domain. Let $\xc, \yc\in \mathfrak S(\HH^{\circ})$, and let $B\in \pi_2(\xc, \yc)$. By adding regions of $\Sigma^{\circ}\setminus \balpha^{\circ}$, we can assume that the domain of $B$ is contained entirely in the top half of $\hc$ and has zero multiplicity in the lowest region (the one containing $O_1$) of the top half of $\hc$.
The oriented boundary of $B$ splits into two pieces,  $\bdy^{\alpha}B\subset \balpha^{\circ}$ and $\bdy^{\beta}B\subset\bbeta^{\circ}$.
The piece $\bdy^{\alpha}B$ is the union of arcs in $\balpha^{\circ}$ such that $\bdy(\bdy^{\alpha}B) = \yc-\xc$. Let $\alpha_1, \ldots, \alpha_{n+1}$ be the $\alpha$-circles in $\HH^{\circ}$ resulting from the gluing of the $\alpha$-arcs in $\HH$, labelled so that $a_i^0\in \alpha_i$, and let $x_i = \xc\cap \alpha_i$, $y_i = \yc\cap \alpha_i$. Below, we turn our attention to the oriented arcs  $c_i:=\bdy^{\alpha}B\cap \alpha_i$, and to each $c_i$ we associate a number $t_i\in \{-1, 0, 1\}$. Since $B$ is contained in the top half of $\hc$, there are three possibilities for each  $c_i$:
\begin{itemize}
\item $c_i$ is contained in the rightmost/leftmost grid if and only if $x_i$ and $y_i$ are (in this case, define   $t_i=0$);
\item $c_i$ covers both the rightmost and the leftmost grid, and is oriented to the right, as seen on Figure \ref{fig:ai}, if and only if $x_i$ is in the rightmost grid and $y_i$ is in the leftmost grid (in this case, define $t_i = 1$);
\item $c_i$ covers both the rightmost and the leftmost grid, and is oriented to the left, as seen on Figure \ref{fig:ai}, if and only if $x_i$ is in the leftmost grid and $y_i$ is in the rightmost grid (in this case, define $t_i = -1$).
\end{itemize}

For $1\leq i\leq n$, let $R_i$ be the region of $\Sigma^{\circ}\setminus(\balpha^{\circ}\cup \bbeta^{\circ})$ containing the image of the interval $(a_i^0, a_{i+1}^0)$ in $\Sigma^{\circ}$, and let $R_{n+1}$ be the topmost region of the top half of $\hc$. See Figure \ref{fig:ai}.
\begin{figure}[h]
 \centering
     \labellist   
        \pinlabel \textcolor{red}{$c_1$} at 170 27
        \pinlabel \textcolor{red}{$c_2$} at 212 37
       \pinlabel \textcolor{red}{$c_n$} at 212 54
       \pinlabel \textcolor{red}{$c_{n+1}$} at 170 68       
        \pinlabel $R_1$ at 194 32 
        \pinlabel $R_n$ at 194 60 
        \pinlabel $R_{n+1}$ at 187 73 
        \pinlabel $X_1$ at 105 75 
        \pinlabel $O_1$ at 105 17 
        \pinlabel $\rotatebox{90}{\textcolor{red}{\ldots}}$ at 193 47
     \endlabellist
       \includegraphics[scale=1.4]{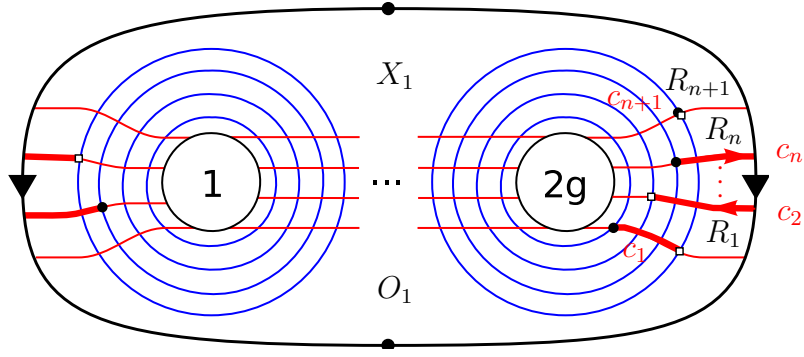} 
       \vskip .1 cm 
       \caption{The top half of $\hc$, along with the arcs $c_i$ in thick red ($c_{n+1}$ is just a point). }\label{fig:ai}
\end{figure}

It is not hard to see that the multiplicity of $B$ at each $R_i$ is $t_1+\cdots+t_i$, and that the multiplicity of $B$ at $R_{n+1}$ is zero if and only if the generators $\xx$ and $\yy$ of $\HH$ corresponding to $\xc$ and $\yc$ occupy the same number of arcs on the right. 

Suppose  $\xc, \yc \in \mathfrak S_i(\HH^{\circ})$. By the above, $\xc$ and $\yc$ are connected by a domain $B$ contained entirely in the nice part of $\hc$, i.e. $n_{X_1}(B) = n_{X_2}(B) = n_{O_1}(B)= n_{O_2}(B) = 0$, so $A_0(\xx)= A_0(\yy)$. Then $B$ is the result of self-gluing a domain $B'$ in $\HH$. Note that the left and right multiplicities of $B'$ match up, i.e. if $p_0\in \bdy^0\HH$ and $p_1\in \bdy^1\HH$ are points that are identified in $\hc$, then $m(\bdy^0B', p_0) = - m(\bdy^1B', p_1)$. Recall the definitions of the sets $S^i_{\OO}, S^i_{\XX}, S^i_{\xx}, S^i_{\yy}$, $\bar{S^i_{\OO}}, \bar{S^i_{\XX}}, \bar{S^i_{\xx}}, \bar{S^i_{\yy}}$,  and of the gradings of domains from \cite[Section 11.2]{pv}. For each subscript $\bullet\in\{\OO, \XX, \xx, \yy \}$, $p_0\in \bar{S^0_{\bullet}}$ if and only if $p_1\in S^1_{\bullet}$. Further, $e(B) = e(B')$, and $n_p(B) = n_p(B')$ for any point $p$. 
Then 
 \begin{align*}
M(B') =& -e(B') - n_{\xx}(B') - n_{\yy}(B') + \frac 1 2 m([\bdy^{\bdy}B'], \bar{S^0_{\xx}} + \bar{S^0_{\yy}} + S^1_{\xx} + S^1_{\yy})  \\
 & - m([\bdy^{\bdy} B'], \bar{\SO^0}+\SO^1) + 2n_{\OO}(B') = -e(B) - n_{\xx}(B) - n_{\yy}(B)  +   2n_{\OO}(B) = M(B)\\ 
A(B') =& \frac 1 2 m([\bdy^{\bdy} B'], \bar{\SX^0} - \bar{\SO^0}+ \SX^1 - \SO^1)+ n_{\OO}(B') - n_{\XX}(B') = n_{\OO}(B) - n_{\XX}(B) = A(B).
\end{align*}
Thus, the relative $(M, A)$ gradings are the same in $\ctt(\HH)$ as in $\cfkhat(\HH^{\circ})$, and the relative $A_0$ grading is zero, i.e. 
 \begin{align*}
M(\xc) - M(\yc) &= M(\xx) - M(\yy) \\
A(\xc) - A(\yc) &= A(\xx) - A(\yy) \\
A_0(\xc) - A_0(\yc) & = 0.
\end{align*}

Next, we compare the gradings of generators with distinct numbers of occupied arcs on the right. The plumbing of $4g$ grid diagrams for $\HH$ corresponds to a sequence of $4g$ shadows $\P_1, \ldots, \P_{4g}$, see \cite[Sections 3 and 4]{pv} and Figure \ref{fig:gens}. Suppose  $\mathfrak S_i(\HH^{\circ})\neq \emptyset$ and  $\mathfrak S_{i+j}(\HH^{\circ})\neq \emptyset$ for some $i,j$, and let $\xx_i\in \mathfrak S_i(\HH)^{\circ}, \xx_{i+j}\in \mathfrak S_{i+j}(\HH)^{\circ}$. The generator $\xx_{i+j}$ has $j$ more strands than $\xx_i$ in each  even-indexed shadow $\P_{2t}$. Choose one strand of $\xx_{i+j}$ in each $\P_{2t}$, and let $p_{2t-1}$ and $p_{2t}$ be the endpoints of this strand in $\bdy^0\P_{2t}$ and $\bdy^1\P_{2t}$, respectively. Replacing these $2g$ strands with the strands from $p_{2t}$ to $p_{2t+1}$ produces a generator $\xx_{i+j-1}\in \mathfrak S_{i+j-1}(\HH)^{\circ}$. Repeating this procedure shows that $\mathfrak S_k(\HH)^{\circ}\neq \emptyset$ for every $i+1\leq k\leq i+j-1$ as well.

Then it suffices to choose two generators  $\xx_i\in \mathfrak S_i(\HH)^{\circ}$ and  $\xx_{i+1}\in\mathfrak S_{i+1}(\HH)^{\circ}$ for each $i$ such that $\mathfrak S_i(\HH)^{\circ}\neq \emptyset$ and $\mathfrak S_{i+1}(\HH)^{\circ}\neq \emptyset$, and understand the relative $(M, A, A_0)$ grading for the corresponding generators $\xc_i, \xc_{i+1}$. Let $\xx_{i+1}\in\mathfrak S_{i+1}(\HH)^{\circ}$ and let $p_0, \ldots, p_{4g}\simeq p_0$  be as above. Modify $\xx_{i+1}$ as follows. 
Let $q_t$ be the topmost point in $\bdy^1\P_t$. The strand at $q_t$ may be contained in $\P_t$ or in $\P_{t+1}$. Starting at $t=1$, and moving up to $t=4g$ (identify $\bdy^1\P_{4g}$ with $\bdy^0\P_1$), do the following exchanges of strands. If $q_t\neq p_t$, take the two distinct strands with ends at $q_t$ and at $p_t$, and exchange their endpoints on $\bdy^0\P_t$. In other words, if one strand connects $p_t$ to another point $p_t'$, and the other strand connected $q_t$ to another point $q_t'$, then replace the two strands with a strand connecting $p_t$ to $q_t'$ and a strand connecting $q_t$ to $p_t'$. In this modified $\xx_{i+1}$, there is a strand connecting $q_{2t-1}$ to $q_{2t}$, for $1\leq t\leq  2g$.
Let $\xx_i$ be the generator obtained from $\xx_{i+1}$ by replacing these strands with strands from $q_{2t}$ to $q_{2t+1}$, as above. Now  $\xx_i\in \mathfrak S_i(\HH)^{\circ}$ and  $\xx_{i+1}\in\mathfrak S_{i+1}(\HH)^{\circ}$ agree almost everywhere, except that $\xx_{i+1}$ contains the strand at the very top of each even-indexed shadow, and $\xx_i$  contains the strand at the very top of each odd-indexed shadow. The Maslov and Alexander gradings on strand generators are defined by counting various intersections of strands, see \cite[Section 3.4]{pv}, and  one sees that $M(\xx_{i+1}) = M(\xx_i)$ and $A(\xx_{i+1}) = A(\xx_i)$.

 Switching back to Heegaard diagrams, $\xx_i$ and $\xx_{i+1}$ differing in the above way is  equivalent to saying that the $4g$-gon $R_{n+1}$ connects $\xc_{i+1}$ to $\xc_i$. Since $e(R_{n+1}) = 1-g$, $n_{\xc_{i+1}}(R_{n+1}) = g/2 = n_{\xc_i}(R_{n+1})$,  $n_{X_1}(R_{n+1}) = 1$ and $n_p(R_{n+1}) = 0$ for any other $p\in \XX\cup \OO$, we see that 
\begin{align*}
M(\xc_{i+1}) - M(\xc_i) &= 1\\
A(\xc_{i+1}) - A(\xc_i) &= 0\\
A_0(\xc_{i+1}) - A_0(\xc_i) &= 1.
\end{align*}

So for $i<j$ and arbitrary $\xc_i\in \mathfrak S_i(\HH^{\circ})$, $\xc_j\in \mathfrak S_j(\HH^{\circ})$, we have
\begin{align}
M(\xc_j) - M(\xc_i) &= M(\xx_j) - M(\xx_i) + j-i\label{eq1}\\
A(\xc_j) - A(\xc_i) &= A(\xx_j) - A(\xx_i)\label{eq2}\\
A_0(\xc_j) - A_0(\xc_i) &= j-i.\label{eq3}
\end{align}

The argument that the isomorphism respects the absolute gradings is analogous to the one from \cite[Section 6]{pv}. With the $4g$ grids arranged as in Figure \ref{fig:gens}, indexed $G_1, \ldots, G_{4g}$ from left to right, let $\xc_{\OO}$ be the generator formed by the bottom-left corner $x_j$ of each $O_j$ in $G_{4i}$ and $G_{4i+1}$, the top-right corner $x_j$ of each $O_j$ in $G_{4i+2}$ and $G_{4i+3}$,  the very top-right corner $x_{4i+1}'$ of each grid $G_{4i+1}$, and the bottom-left corner $x_{4i+3}'$ of each grid $G_{4i+3}$. Define $\xc_{\XX}$ analogously, by replacing $O_j$ with $X_j$  in the above definition. 

\begin{figure}[h]
 \centering
       \includegraphics[scale=1.15]{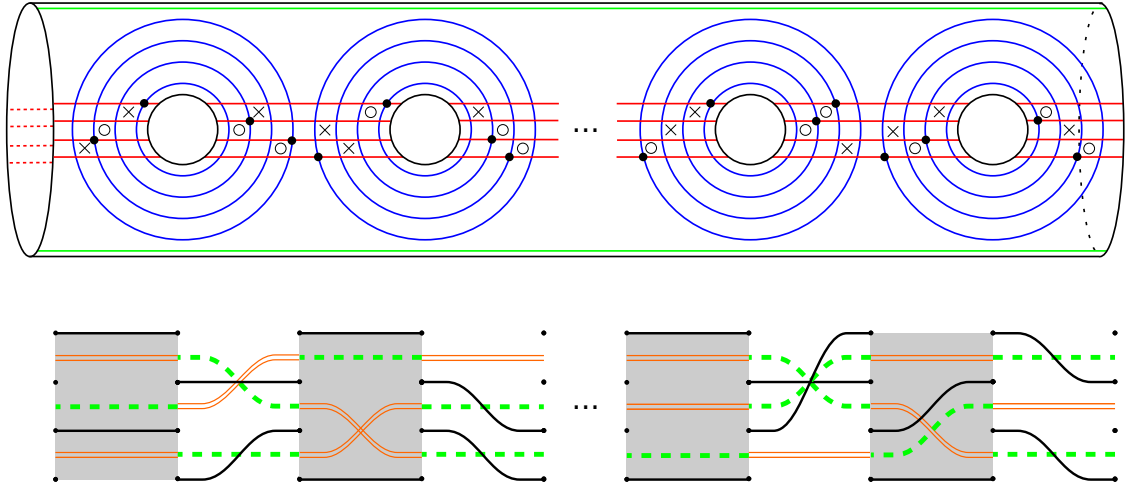} 
       \vskip .1 cm 
       \caption{Top: A Heegaard diagram for $T$ coming from a plumbing of grids and the ``bottom-most" generator $\xx_{\OO}$.
  Bottom: The corresponding sequence of shadows for $T$, and the strands diagram for $\xx_{\OO}$.}\label{fig:gens}
\end{figure}

  Denote the $\beta$ circle containing each $x_i$ or $x_i'$ by $\beta_i$ or $\beta_i'$, respectively. 
Form a set of circles $\bgamma$ by performing handleslides (which are allowed to cross $\XX$ but not $\OO$) of all $\beta_i$ and perturbations of all $\beta_i'$, as in Figure  \ref{fig:hhd}.
We look at the holomorphic triangle map  associated to $(\Sigma, \balpha, \bbeta, \bgamma, \OO)$, see \cite{lmw, osz14, osz5}. Let $k$ be the number of $O$s in $\HH$ (so the number of $O$s in $\hc$ is $k+2$).
Observe that $(\Sigma, \bbeta, \bgamma, \OO)$ is a diagram for $(\#^{k+2}S^1\times S^2)$, and let $\Theta$ be the top-dimensional generator.  Let $\yy$ be the generator of $(\Sigma, \balpha, \bgamma, \OO)$ nearest to $\xc_{\OO}$. There is a holomorphic triangle that maps $\xc_{\OO}\otimes \Theta$ to $\yy$, so $M(\xc_{\OO}) = M(\yy)$.

Observe that $(\Sigma, \balpha, \bgamma, \OO)$
is a diagram for $S^3$ with $k+2$ basepoints, so, as a group graded by the Maslov grading, we have $\hfhat(\Sigma, \balpha, \bgamma, \OO)\cong H_{\ast+k+1}(T^{k+1})$. The diagram has $2^{k+1}$ generators, so they are a basis for the homology.  
Let $\yy'$ be the generator obtained from $\yy$  by replacing the intersection  of $\gamma_{4i+1}'$ and the topmost $\alpha$ of $G_{4i+1}$ with the intersection of $\gamma_{4i+1}'$ and the bottommost $\alpha$ of $G_{4i+2}$, and the intersection  of $\gamma_{4i+3}'$ and the bottommost $\alpha$ of $G_{4i+3}$ with the intersection of $\gamma_{4i+3}'$ and the topmost $\alpha$ of $G_{4i+4}$. There are $k$ disjoint bigons going into $\yy'$, so $M(\yy')\leq -k$.  The shaded $4g$-gon on Figure \ref{fig:hhd} from $\yy'$ to $\yy$ shows that $M(\yy')-M(\yy) = 1$, so $M(\yy)\leq -k-1$. But the $2^{k+1}$ generators are a basis for the homology, so $M(\yy)\in [-k-1, 0]$. Thus, $M(\yy)= -k-1$, so $M(\xc_{\OO})= -k-1$. We can also compute $M(\xx_{\OO})$  using the definition from \cite[Section 3.4]{pv}. The computation is analogous to the one from \cite[Section 6]{pv}, and we see that $M(\xx_{\OO}) = -k$.
Note that $S(\xc_{\OO})=a$, so by Equation \ref{eq1}, for an arbitrary generator $\xc_i\in \mathfrak S_i(\HH^{\circ})$ we have
\[M(\xc_i) - M(\xc_{\OO}) = M(\xx_i) - M(\xx_{\OO}) + i-a\]
so 
\[M(\xc_i) = M(\xx_i) + i-a-1.\]

\begin{figure}[h!]
 \centering
       \labellist   
        \pinlabel {\scriptsize $1$} at 263 183 
        \pinlabel {\scriptsize  $1$} at 263 119 
        \pinlabel {\scriptsize  $2$} at 270 69 
        \pinlabel {\scriptsize  $2$} at 270 18
     \endlabellist
       \includegraphics[scale=1.36]{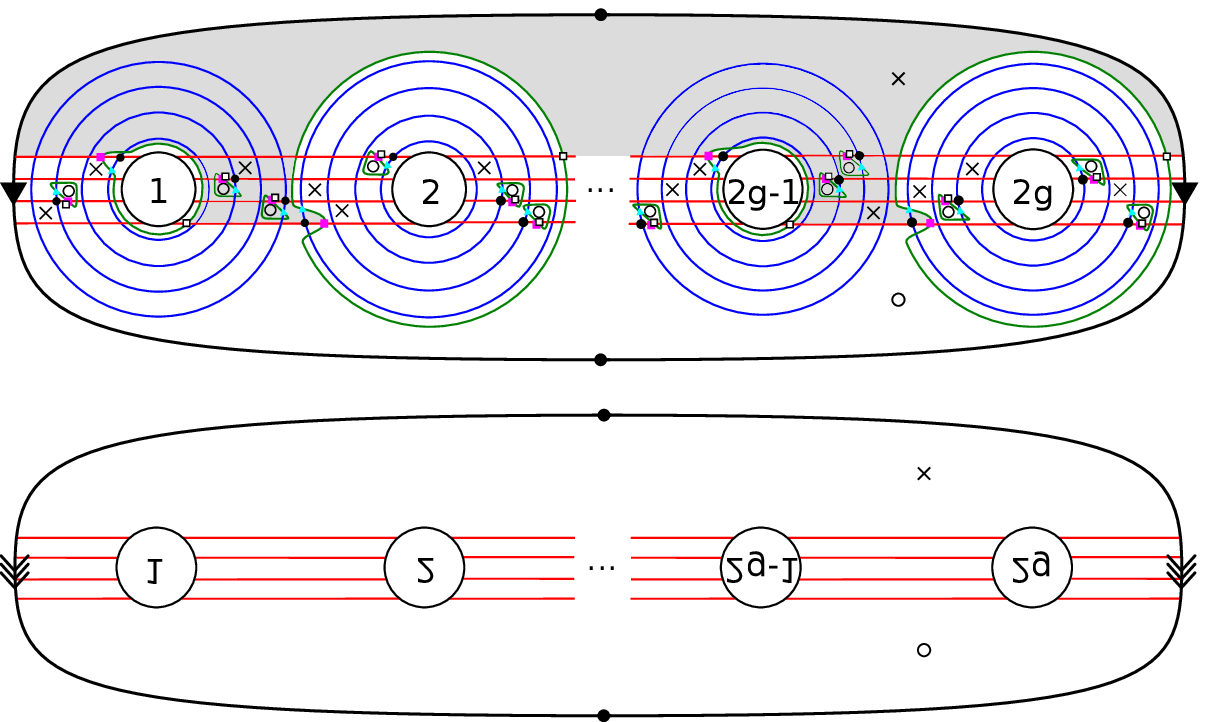} 
       \vskip .1 cm 
       \caption{The Heegaard triple $(\Sigma, \balpha, \bbeta, \bgamma, \OO)$, with the original $\XX$ markings left in. The black dots form the generator $\xx_{\OO}$, the purple squares form $\yy$, the white squares from $\yy'$, and the cyan triangles form $\Theta$.}\label{fig:hhd}
\end{figure}

Similarly, for the $z$-normalized (or, $\XX$-normalized, to match the notation in this paper) grading $N$, we have $N(\xc_{\XX}) = -k-1$. Since $N = M - 2A - (k+2 - l)$, we get 
\[A(\xc_{\XX}) = \frac 1 2 (M(\xc_{\XX}) - N(\xc_{\XX}) - (k+2 - l)) = \frac 1 2 (M(\xc_{\XX})+ l-1).\]

Again using the definition from \cite[Section 3.4]{pv} as we do in \cite[Section 6]{pv}, we see that $M(\xx_{\OO}) = -k$, $N(\xx_{\XX}) = -k$, $A(\xx_{\XX}) = \frac 1 2 M(\xx_{\XX})$. Since $S(\xx_{\XX})=n-a$, we have $M(\xc_{\XX}) = M(\xx_{\XX}) + n-2a-1$, so we get
\[A(\xc_{\XX}) = A(\xx_{\XX}) +\frac 1 2( n-2a+ l-2).\]

The Alexander multigrading on a generator $\xc$ can be described by  the relative  $\textrm{Spin}^c$ structure $\mathfrak s(\xc)\in \underline{\textrm{Spin}}^c(S^3, L)$, see \cite{oszlink}. In the case when the link is in $S^3$, one can think of  $A_i$ by looking at the projection of a Seifert surface for $L_i$ onto $\hc$. Specifically, for a generator $\xc_i \in\mathfrak S_i(\hc)$, we can compute its $A_0$ grading in the following way. Connect $X_1$ to $O_1$ and $X_2$ to $O_2$ away from $\bbeta^{\circ}$, and $O_1$ to $X_2$ and $O_2$ to $X_1$ away from $\balpha^{\circ}$ to obtain a  curve $C$ on $\hc$ representing $L_0$, so that  $C$ is negative the boundary of a disk $D$ that is a neighborhood of the rightmost grid (in general $C$ may be immersed but not necessarily embedded). Then 
\[A_0(\xc_i) = \frac 1 2(e(D)+ 2n_{\xc_i}(D) - n_{\XX}(D) - n_{\OO}(D)) =  i -\frac{n+1}{2}.\]
It follows that 
\[A'(\xc_i)= A(\xx_i) -S(\xx_i)+ \frac l 2  +n-a-1\]

This completes the identification of gradings.
\end{proof}

\remark The authors are in the process of  upgrading the invariants in \cite{pv} to have Alexander multigradings, corresponding to different components of the tangle.  The arguments in this paper automatically imply that the isomorphism from Theorem \ref{thm:graded} respects the multigrading, with appropriate additive constants.  

 Last, we prove Corollary \ref{cor:braid}.
 
 \begin{proof}[Proof of Corollary \ref{cor:braid}]  
Fix $n$, and let $\mathbbm 1_n$ denote the trivial braid on $n$ strands (oriented from top to bottom). Let $B$ be an $n$-braid. 
Let $\Aa=\Aa(\partial^1\mathbbm 1_n)=\Aa(\partial^1B)$, and let $\textrm{H}(\textrm{Mod}_{\Aa})$ be the homotopy category of  right type $A$ modules over $\Aa$. Suppose that $-\boxtimes \ctt(B)$ acts as the identity on  $\textrm{H}(\textrm{Mod}_{\Aa})$. Recall that one can recover the homotopy type of any $\AA$ bimodule, i.e. $A_{\infty}$ bimodule,  ${}_{\Aa}N_{\Aa}$ from the functor $-\widetilde{\otimes} N$ on $\textrm{H}(\textrm{Mod}_{\Aa})$, for example as  $Q_{-\widetilde{\otimes} N}(\Aa_\Aa,\Aa_\Aa)$ 
in \cite[(7.23) and (7.24)]{seidel}. Thus,  $-\boxtimes \ctt(B) = -\widetilde{\otimes} (\Aa\boxtimes \ctt(B))$ being the identity implies $\Aa\simeq \Aa\boxtimes \ctt(B)$, so $ \ctt(\mathbbm 1_n)\simeq\ctt(B)$.
We show that the latter implies $B=\mathbbm 1_n$. Let $k\geq 1$ be an integer such that $B^k$ is pure. Then $\ctt(B^k)\simeq \ctt(B)\boxtimes\cdots\boxtimes \ctt(B)\simeq \ctt(\mathbbm 1_n) \boxtimes\cdots\boxtimes \ctt(\mathbbm 1_n)\simeq\ctt(\mathbbm 1_n^k)\simeq \ctt(\mathbbm 1_n)$. Taking Hochschild homology, it follows that $\widehat{\mathit{HFL}}((B^k)_0,0)\cong \widehat{\mathit{HFL}}((\mathbbm 1_n)_0,0)$. 
Recall that, in our notation, \cite[Theorem 1(b)]{bg} says that if there is a triply graded isomorphism $\widehat{\mathit{HFL}}(T_0,0)\cong \widehat{\mathit{HFL}}((\mathbbm 1_n)_0,0)$ and $T$ is a pure braid, then $T=\mathbbm 1_n$. Thus, $B^k = \mathbbm 1_n$. Since the braid group is torsion-free, it follows that $B=\mathbbm 1_n$.
\end{proof}

\bibliographystyle{hamsplain2}
\bibliography{master}

\providecommand{\bysame}{\leavevmode\hbox to3em{\hrulefill}\thinspace}
\providecommand{\href}[2]{#2}
\begin{thebibliography}{1}

\bibitem{bg}
J.~Baldwin and J.~E. Grigsby, \emph{Categorified invariants and the braid
  group},  (2013), \mbox{\href{http://arxiv.org/abs/1212.2222}{{\tt
  arXiv:1212.2222}}}.

\bibitem{lmw}
Robert Lipshitz, Ciprian Manolescu, and Jiajun Wang, \emph{Combinatorial
  cobordism maps in hat {H}eegaard {F}loer theory}, Duke Math. J. \textbf{145}
  (2008), no.~2, 207--247.

\bibitem{bimod}
Robert Lipshitz, Peter Ozsv{\'a}th, and Dylan Thurston, \emph{Bimodules in
  bordered {H}eegaard {F}loer homology},  (2010),
  \mbox{\href{http://arxiv.org/abs/1003.0598v3}{{\tt arXiv:1003.0598v3}}}.

\bibitem{osz14}
Peter Ozsv{\'a}th and Zolt{\'a}n Szab{\'o}, \emph{Holomorphic disks and
  three-manifold invariants: properties and applications}, Annals of
  Mathematics \textbf{159} (2004), no.~3, 1159--1245,
  \mbox{\href{http://arxiv.org/abs/math.SG/0105202}{{\tt
  arXiv:math.SG/0105202}}}.

\bibitem{osz5}
\bysame, \emph{Holomorphic triangles and invariants for smooth four-manifolds},
  Adv. Math. \textbf{202} (2006), 326--400,
  \mbox{\href{http://arxiv.org/abs/math.SG/0110169}{{\tt
  arXiv:math.SG/0110169}}}.

\bibitem{oszlink}
\bysame, \emph{Holomorphic disks, link invariants and the multi-variable
  {A}lexander polynomial}, Algebr. Geom. Topol. \textbf{8} (2008), no.~2,
  615--692.

\bibitem{pv}
Ina Petkova and Vera V{\'e}rtesi, \emph{Combinatorial tangle {F}loer homology},
   (2014), \mbox{\href{http://arxiv.org/abs/1410.2161}{{\tt arXiv:1410.2161}}}.

\bibitem{seidel}
Paul Seidel, \emph{Lecture notes on categorical dynamics and symplectic
  topology},  (2013),
  \mbox{\href{http://www-math.mit.edu/~seidel/937/lecture-notes.pdf}{{\tt notes}}}

\end{thebibliography}

\end{document}